\newtheorem{theorem}{Theorem}
\newtheorem{proposition}{Proposition}
\newtheorem{corollary}{Corollary}
\newcommand{\V}[1]{\ensuremath{\boldsymbol{#1}}\xspace}
\def\threeImages#1#2#3#4#5#6#7#8#9 
\def\twoImages#1#2#3#4#5#6 
\theoremstyle{remark}
\newtheorem{remark}{Remark}
\begin{document}
\title{A Note on New Bernstein-type Inequalities for the Log-likelihood Function of Bernoulli Variables}
\author{
	Yunpeng Zhao \thanks{School of Mathematical and Natural Sciences,
		Arizona State University, AZ, 85306. \texttt{Email:} yunpeng.zhao@asu.edu.} 
	 } 

\maketitle

\begin{abstract}
	We prove a new Bernstein-type inequality for the log-likelihood function of Bernoulli variables. In contrast to classical Bernstein's inequality and Hoeffding's inequality when applied to this log-likelihood, the new bound is independent of the parameters of the Bernoulli variables and therefore does not blow up as the parameters approach 0 or 1. The new inequality strengthens certain theoretical results on likelihood-based methods for community detection in networks and can be applied to other likelihood-based methods for binary data.

	\end{abstract}

{\it Keywords:}  Concentration inequality; Bernstein-type inequality; Bernoulli distribution; moment generating function

\section{Introduction}
Let $X_1,X_2,...,X_n$ be  independent Bernoulli random variables, where $X_i$ takes the value 1 with probability $p_i$, denoted by Ber($p_{i}$). We are interested in deriving a concentration bound, which decays exponentially and is independent of parameters $p_i$, for the joint log-likelihood function of $X_1,X_2,...,X_n$. That is, 
\begin{align}
\mathbb{P} \left (   \left |   \sum_{i=1}^n \left (X_i \log p_i +(1-X_i) \log (1-p_i) \right ) -  \sum_{i=1}^n \left ( p_i \log p_i +(1-p_i) \log (1-p_i) \right) \right| \geq n \epsilon \right ) \leq c_1 e^{-c_2 n}, \label{motivating}
\end{align}
where $c_1$ and $c_2$ are constants that only depend on $\epsilon$. 

This research is motivated by theoretical studies of likelihood-based methods for binary data, in particular likelihood-based methods for community detection in networks. For example, Theorem 2 in \cite{choi2012stochastic} relies  on an inequality of this type and so does Theorem 2 in \cite{paul2016consistent}.  

We begin with classical results. By symmetry, we only consider
\begin{align*}
\mathbb{P} \left (   \left |   \sum_{i=1}^n X_i \log p_i -  \sum_{i=1}^n p_i \log p_i  \right|  \geq n \epsilon \right ). 
\end{align*}
Since $ X_i \log p_i \equiv p_i \log p_i$ almost surely when $p_i=1$ or 0 (using the convention $0\log 0=0$), the term can be dropped. Without loss of generality,  assume $p_i \in (0,1)$ for $i=1,...,n$. Noticing that $X_i \log p_i \in [\log p_{i} , 0]$ for $i=1,...,n$, we have Hoeffding's inequality \citep{Hoeffding63}: for all $\epsilon>0$, 
\begin{align*}
\mathbb{P} \left (   \left |  \sum_{i=1}^n (X_i-p_i)  \log p_i  \right| \geq  n \epsilon  \right ) \leq 2 \exp \left \{ -\frac{2 n^2 \epsilon^2}{ \sum_{i=1}^n (\log p_{i})^2 }  \right \}.
\end{align*}
Let $p_{(1)}$ be the smallest value among $p_1,...,p_n$. Then $|(X_i-p_i) \log p_i| \leq |\log p_{(1)}|$ for $i=1,...,n$. Bernstein's inequality (see \cite{dubhashi2009concentration}, Theorem 1.2) gives: for all $\epsilon>0$, 
\begin{align*}
\mathbb{P} \left (   \left |  \sum_{i=1}^n (X_i-p_i)  \log p_i  \right| \geq n \epsilon  \right ) \leq 2 \exp \left \{ -\frac{n^2 \epsilon^2/2 }{ \sum_{i=1}^n \textnormal{Var}(X_i\log p_i) + |\log p_{(1)} | n \epsilon/3}    \right \}.
\end{align*}

Note that both inequalities depend on $p_1,...,p_n$. As a result, when $p_{(1)}$ goes to 0 fast enough as $n$ grows, the bounds can be trivial due to the divergence of $|\log p_{(1)}|$. When applying these inequalities, technical assumptions are therefore needed to control the rate of the parameters going to the boundaries, for example, the condition on $P_{ij}$ in Theorem 2 of \cite{choi2012stochastic}. 

In this note, we prove a Bernstein-type inequality where the bound is independent of $p_1,...,p_n$. In other words, we show that $\sum_{i=1}^n (X_i-p_i)  \log p_i$ is in fact well-behaved when the parameters are near the boundary. The results such as in \cite{choi2012stochastic} and \cite{paul2016consistent} can therefore be strengthened by removing the technical assumptions. The new inequality is particularly useful in cases where those assumptions are not convenient to be made. 

\section{Main Result}\label{sec:main}
\begin{theorem}\label{main_thm}
	Let $X_i$ be independent $\textnormal{Ber}(p_i)$ for $i=1,...,n$ where $p_i \in [0,1]$. Let $G(p_i,\lambda)$ be the moment generating function (MGF) of $(X_i-p_i)  \log p_i$. Then for $|\lambda|<1$,
	\begin{align}
G(p_i,\lambda) \leq \exp \left \{ \frac{\lambda^2}{2(1-|\lambda|)}  \right  \}. \label{mgf_bound}
	\end{align}
	Furthermore, for all $t>0$,
	\begin{align}
	\mathbb{P} \left (   \left |  \sum_{i=1}^n (X_i-p_i)  \log p_i  \right| \geq t  \right ) \leq 2 \exp \left \{-\frac{t^2 }{2(n+t)}  \right \}. \label{main1}
	\end{align}
\end{theorem}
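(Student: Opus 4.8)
The plan is to reduce the entire theorem to a single moment bound on $Y_i := (X_i - p_i)\log p_i$ that is \emph{uniform} in $p_i$, namely $\mathbb{E}|Y_i|^k \le k!/2$ for every integer $k \ge 2$, and then to run the standard Bernstein/Chernoff machinery. First I would dispose of the boundary cases: when $p_i \in \{0,1\}$ one has $Y_i = 0$ almost surely (using $0\log 0 = 0$), so $G(p_i,\lambda)=1$ and \eqref{mgf_bound} holds trivially since the right side is at least $1$. Thus I may assume $p_i \in (0,1)$, where $\log p_i < 0$. The target moment condition $\mathbb{E}|Y_i|^k \le k!/2$ is exactly the Bernstein moment condition with variance proxy and scale parameter both equal to $1$, which is precisely what produces the MGF bound of the stated form.

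For the moment bound I would compute directly from the two-point law of $X_i$: since $X_i - p_i$ equals $1-p_i$ with probability $p_i$ and $-p_i$ with probability $1-p_i$,
$$\mathbb{E}|Y_i|^k = \big[p_i(1-p_i)^k + (1-p_i)p_i^k\big]\,(-\log p_i)^k,$$
and I would bound the two summands separately. Using $(1-p_i)^k \le 1$, the first contributes at most $\sup_{p\in(0,1)} p(-\log p)^k$; differentiating gives $(-\log p)^{k-1}(-\log p - k)$, so the maximizer is $p=e^{-k}$ with value $k^k e^{-k}$. Using $1-p_i \le 1$, the second is at most $(-p_i\log p_i)^k \le e^{-k}$, since $-p\log p \le 1/e$ on $(0,1)$. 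Hence $\mathbb{E}|Y_i|^k \le k^k e^{-k} + e^{-k}$, and the Stirling lower bound $k! \ge \sqrt{2\pi k}\,k^k e^{-k}$ reduces the desired inequality $k^k e^{-k}+e^{-k} \le k!/2$ to the elementary check $1 + k^{-k} \le \tfrac12\sqrt{2\pi k}$, which holds for all $k \ge 2$. This moment computation is where the real work lies, and it is exactly the step that makes everything independent of $p_i$: the blow-up of $(-\log p_i)^k$ near the boundary is cancelled by the vanishing of the mass factor $p_i(1-p_i)^k+(1-p_i)p_i^k$. The main obstacle I anticipate is making the Stirling estimate tight enough to survive down to $k=2$ rather than only for large $k$.

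Given the moment bound, the MGF bound \eqref{mgf_bound} follows by expanding $G(p_i,\lambda) = 1 + \sum_{k\ge 2}\frac{\lambda^k}{k!}\mathbb{E}[Y_i^k]$, where the $k=1$ term drops because $\mathbb{E}Y_i = 0$. Bounding termwise by absolute values gives $G(p_i,\lambda) \le 1 + \tfrac12\sum_{k\ge2}|\lambda|^k = 1 + \frac{\lambda^2}{2(1-|\lambda|)}$ for $|\lambda|<1$, and applying $1+x \le e^x$ yields \eqref{mgf_bound}.

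For the tail bound \eqref{main1} I would set $S = \sum_{i=1}^n Y_i$ and use independence to get $\mathbb{E} e^{\lambda S} = \prod_{i=1}^n G(p_i,\lambda) \le \exp\big\{\tfrac{n\lambda^2}{2(1-\lambda)}\big\}$ for $\lambda \in (0,1)$. The Chernoff bound $\mathbb{P}(S \ge t) \le e^{-\lambda t}\,\mathbb{E} e^{\lambda S}$ with the explicit choice $\lambda = t/(n+t) \in (0,1)$ then produces exponent $-t^2/\big(2(n+t)\big)$ after a short simplification. Since the MGF bound is even in $\lambda$, the identical argument applied to $-S$ controls the lower tail, and a union bound supplies the factor $2$ in \eqref{main1}.
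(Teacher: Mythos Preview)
Your proposal is correct and follows essentially the same route as the paper: both reduce to the uniform moment bound via the same two-term split, optimize $p(-\log p)^k$ at $p=e^{-k}$, bound the second term by $e^{-k}$, invoke Stirling to reach the Bernstein condition with $\sigma^2=b=1$, and finish with the identical Chernoff choice $\lambda=t/(n+t)$. The only cosmetic difference is that you fold the $k=2$ case into the same Stirling estimate (checking $1+k^{-k}\le\tfrac12\sqrt{2\pi k}$ at $k=2$), whereas the paper treats the variance term separately via $p(1-p)(\log p)^2\le 1$ and applies the Bernstein condition only for $m\ge 3$.
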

\begin{proof}
	Let $Y_i=(X_i-p_i)  \log p_i$. 
Then 
	\begin{align*}
	G(p_i,\lambda)=\mathbb{E} [e^{\lambda Y_i}]= p_i e^{\lambda(1-p_i)\log p_i}+(1-p_i) e^{-\lambda p_i \log p_i}. 
	\end{align*}
	 The key step is to prove \eqref{mgf_bound}, which is an exponential upper bound for $G(p_i,\lambda)$. Note that the upper bound is independent of $p_i$. First consider the case where $p_i \in (0,1)$.

	We prove a Bernstein's condition (see \cite{wainwright_2019}, p. 27 for an introduction) for the moments of $Y_i$. That is, find constants $\sigma^2$ and $b$, such that 
	\begin{align}
	| \mathbb{E} [  Y_i^m ] | \leq \frac{1}{2} m! \sigma^2 b^{m-2} \quad \textnormal{for } m=3,4,... \label{Bern_cond}.
	\end{align}
	Different from \cite{wainwright_2019}, here we look for  constants $\sigma^2$ and $b$ which are independent of $p_i$.
	
	Consider
\begin{align*}
\mathbb{E} [  Y_i^m ] = \underbrace{p_i (1-p_i)^m (\log p_i)^m}_{A_1} +\underbrace{(1-p_i) (-p_i\log p_i)^m}_{A_2}.
\end{align*}
By taking the first and the second derivatives of $p_i(\log p_i)^m$, one can easily check that its optimum is achieved at $p_i=e^{-m}$. Therefore, 
\begin{align*}
|A_1| \leq | p_i (\log p_i)^m | \leq \left (  \frac{m}{e}\right )^m \leq \frac{m!}{\sqrt{2\pi m}},
\end{align*}
where the last inequality follows from Stirling's formula \citep{robbins1955remark}. 
Similarly,
\begin{align*}
|A_2| \leq (1-p_i) (-p_i\log p_i)^m \leq e^{-m}.
\end{align*}
It follows that
\begin{align*}
| \mathbb{E} [  Y_i^m ] | \leq \frac{m!}{\sqrt{2\pi m}}+\frac{1}{e^m} \leq \frac{1}{2} m! \quad \textnormal{for } m=3,4,... .
\end{align*}
Therefore, the Bernstein's condition \eqref{Bern_cond} holds when $\sigma^2=1$ and $b=1$. 

We now use the Bernstein's condition to prove \eqref{mgf_bound}. The argument is similar to \cite{wainwright_2019}, pp. 27-28. We give the details for completeness.

 By the power series expansion of the exponential function and Fubini's theorem (for exchanging the expectation and summation due to $\mathbb{E}[e^{|\lambda Y_i|}]<\infty$), 
\begin{align*}
G(p_i,\lambda)=\mathbb{E} [e^{\lambda Y_i}] & = 1+ \frac{\lambda^2 \textnormal{Var}(Y_i)}{2}+\sum_{m=3}^{\infty} \lambda^m \frac{\mathbb{E} [  Y_i^m ]}{m!} \\
& \leq 1+ \frac{\lambda^2}{2}+\frac{\lambda^2}{2}  \sum_{m=1}^{\infty} |\lambda|^m, 
\end{align*}
where the inequality follows from  the Bernstein's condition \eqref{Bern_cond} and $\textnormal{Var}(Y_i)=p_i(1-p_i) (\log p_i)^2 \leq 1$. For any $|\lambda|<1$, the geometrics series converges, and
\begin{align}
G(p_i,\lambda) \leq 1+ \frac{\lambda^2}{2}\frac{1}{1-|\lambda|}  \leq \exp \left \{ \frac{\lambda^2}{2(1-|\lambda|)}  \right  \}, 
\end{align}
where the second inequality follows from $1+s\leq e^s$. Notice that $G(p_i,\lambda)\equiv 1$ for $p_i=0$ or $1$ so the inequality holds for all $p_i\in [0,1]$.   

The rest of the proof follows from a standard argument using the Chernoff bound, which can be found in a standard textbook on concentration inequalities, for example, \cite{dubhashi2009concentration}, Chapter 1. We give the details for readers who are unfamiliar with this technique. For $-1<\lambda<0$, 
\begin{align*}
	& \mathbb{P} \left (     \sum_{i=1}^n Y_i  \leq -t  \right ) = \mathbb{P} \left ( e^{\lambda \sum_{i=1}^n Y_i}  \geq e^{-\lambda t}  \right ) \leq \frac{\prod_{i=1}^n \mathbb{E} [e^{\lambda Y_i}]}{ e^{-\lambda t} } \leq \exp \left \{ \frac{n\lambda^2}{2(1-|\lambda|)} +\lambda t \right  \}, \\
\end{align*}
where the first inequality is Markov's inequality and the second inequality follows from \eqref{mgf_bound}. By setting $\lambda=-\frac{t}{t+n} \in (-1,0)$, we obtain
\begin{align*}
& \mathbb{P} \left (     \sum_{i=1}^n Y_i  \leq -t  \right ) \leq \exp \left \{-\frac{t^2 }{2(n+t)}  \right \}. 
\end{align*}
The bound for the right tail can be obtained similarly by setting $\lambda=\frac{t}{t+n}$. 
	\end{proof}

\begin{remark}
$\mathbb{E} [  Y_i^m ]$ is dominated by the term $p_i (1-p_i)^m (\log p_i)^m$, which has a bump near the boundary, -- that is, its value achieves the order of $(m/e)^m$ at $p_i=e^{-m}$.  This value is, however, still bounded by $m!$, which implies the left-tail bound of $\sum_{i=1}^n Y_i$ is well-behaved when the parameters are near the boundary. 	
\end{remark}
\begin{remark}
The constant $\sigma^2=1$ in the Bernstein's condition \eqref{Bern_cond} is not the optimal value. We simply choose this value for obtaining a nice form in  \eqref{main1}. The constant $b=1$ is optimal because $1/\sqrt{2\pi m}$ dominates $b^{m-2}$ for any $0<b<1$. This fact can also be seen from the following proposition: 
\end{remark}
\begin{proposition}
For $\lambda<-1$, $ \lim_{p \rightarrow 0^+} G(p,\lambda)=\infty $, which implies $G(p,\lambda)$ cannot be bounded by any function that takes finite values. For $\lambda \geq -1$, $\lim_{p \rightarrow 0^+} G(p,\lambda)<\infty$. 
\end{proposition}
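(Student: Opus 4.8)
The plan is to work directly with the closed form
\[
G(p,\lambda)=\underbrace{p\,e^{\lambda(1-p)\log p}}_{A_1}+\underbrace{(1-p)\,e^{-\lambda p\log p}}_{A_2}
\]
established in the proof of Theorem~\ref{main_thm}, and to analyze each summand separately as $p\to 0^+$. The useful observation is that $e^{\lambda(1-p)\log p}=p^{\lambda(1-p)}$, so the first term can be rewritten as a single power $A_1=p^{\,1+\lambda(1-p)}$. The behaviour of $G$ near the boundary is then governed entirely by the limiting value of the exponent $1+\lambda(1-p)$, which tends to $1+\lambda$; the sign of $1+\lambda$ is exactly what separates the two regimes in the statement.

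I would first dispose of the second term, which behaves the same way for all $\lambda$. Since $p\log p\to 0$ as $p\to 0^+$, we have $-\lambda p\log p\to 0$ and $(1-p)\to 1$, so $A_2\to 1$. Hence $A_2$ is bounded and contributes a finite limit regardless of $\lambda$, and the dichotomy in the proposition must come from $A_1$ alone.

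For the first term I would take logarithms: $\log A_1=\bigl(1+\lambda(1-p)\bigr)\log p$. As $p\to 0^+$ the factor $1+\lambda(1-p)\to 1+\lambda$ while $\log p\to-\infty$. When $\lambda<-1$ the limiting factor $1+\lambda$ is strictly negative, so the product tends to $+\infty$ and thus $A_1\to\infty$; combined with $A_2\to 1$ this gives $G(p,\lambda)\to\infty$, and since $G$ diverges it cannot be dominated by any finite-valued bound, proving the first claim. When $\lambda>-1$ the factor $1+\lambda$ is positive, so $\log A_1\to-\infty$ and $A_1\to 0$, giving $G(p,\lambda)\to 1<\infty$. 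The boundary case $\lambda=-1$ needs separate handling because the exponent degenerates: here $1+\lambda(1-p)=p$, so $A_1=p^{\,p}$ and $\log A_1=p\log p\to 0$, whence $A_1\to 1$ and $G(p,\lambda)\to 2<\infty$. Together these cases establish $\lim_{p\to 0^+}G(p,\lambda)<\infty$ for all $\lambda\ge -1$.

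The argument is elementary, and the only point requiring care -- the one I would treat as the main obstacle -- is that the exponent $1+\lambda(1-p)$ itself depends on $p$, so one cannot simply substitute $p=0$ into the power. Passing to logarithms resolves this: it converts the $p$-dependent exponent into a product whose limit is unambiguous (a factor converging to the nonzero constant $1+\lambda$ times $\log p\to-\infty$), except precisely at $\lambda=-1$ where the constant vanishes and the $p^p$ computation is needed instead. If one prefers to avoid logarithms in the divergent case, the same conclusion follows by noting that for $p$ small enough continuity gives $1+\lambda(1-p)<(1+\lambda)/2<0$, so $A_1=p^{\,1+\lambda(1-p)}>p^{(1+\lambda)/2}\to\infty$.
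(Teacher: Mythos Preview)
Your argument is correct and follows the same route as the paper's proof, which consists of the single observation that $p\,e^{\lambda\log p}=p^{\lambda+1}$. You simply supply the details the paper omits: you verify that the second summand $A_2$ tends to $1$ for every $\lambda$, you keep track of the $p$-dependence in the exponent $1+\lambda(1-p)$ rather than immediately replacing it by $1+\lambda$, and you treat the borderline case $\lambda=-1$ explicitly via $A_1=p^{p}\to 1$.
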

\begin{proof}
	The result is obvious by noticing that $p e^{\lambda \log p}=p^{\lambda+1}$. 
\end{proof}

We now prove \eqref{motivating}. We state a slightly more general result for multinoulli variables. Let $\V{X}_i=(X_{i1},...,X_{iK})$ be a multinoulli variable with $p_{ik}=\mathbb{P}(X_{ik}=1)$, and assume $\V{X}_1,...,\V{X}_n$ are independent. 
\begin{corollary}\label{cl1}
	For $p_{ik}\in [0,1]$, $i=1,...,n$, $k=1,...,K$, and all $\epsilon>0$,
	\begin{align*}
	\mathbb{P} \left (   \left |   \sum_{i=1}^n \sum_{k=1}^K   (X_{ik}-p_{ik}) \log p_{ik}   \right| \geq n \epsilon \right ) \leq 2K \exp \left \{ -\frac{n \epsilon^2}{2K(K+\epsilon)} \right \}.
	\end{align*}
\end{corollary}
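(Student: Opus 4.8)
The plan is to reduce the multinoulli statement to $K$ applications of the Bernoulli tail bound \eqref{main1}, stitched together by a union bound. The crucial observation is that within a single multinoulli vector $\V{X}_i$ the coordinates $X_{i1},\ldots,X_{iK}$ are \emph{not} independent (they satisfy $\sum_k X_{ik}=1$), so Theorem~\ref{main_thm} cannot be applied directly to the full double sum. However, for each fixed category $k$, the marginal $X_{ik}$ is $\textnormal{Ber}(p_{ik})$, and across the index $i$ the variables $X_{1k},\ldots,X_{nk}$ \emph{are} independent because $\V{X}_1,\ldots,\V{X}_n$ are independent. This is exactly the structure required to invoke \eqref{main1} for each $k$ separately.

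Concretely, I would first fix $k$ and set $S_k=\sum_{i=1}^n (X_{ik}-p_{ik})\log p_{ik}$. Since this is a sum of $n$ independent centered Bernoulli log-likelihood terms, Theorem~\ref{main_thm} gives, for every $t>0$,
$$\mathbb{P}\left(|S_k|\geq t\right)\leq 2\exp\left\{-\frac{t^2}{2(n+t)}\right\}.$$
Next I would pass from the individual $S_k$ to their sum by a triangle-inequality/pigeonhole argument: if $\left|\sum_{k=1}^K S_k\right|\geq n\epsilon$, then $\sum_{k=1}^K |S_k|\geq n\epsilon$, so at least one term must satisfy $|S_k|\geq n\epsilon/K$. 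A union bound therefore yields
$$\mathbb{P}\left(\left|\sum_{k=1}^K S_k\right|\geq n\epsilon\right)\leq \sum_{k=1}^K \mathbb{P}\left(|S_k|\geq \frac{n\epsilon}{K}\right).$$

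Finally I would substitute $t=n\epsilon/K$ into the single-category bound and simplify. The exponent becomes
$$\frac{(n\epsilon/K)^2}{2(n+n\epsilon/K)}=\frac{n\epsilon^2}{2K(K+\epsilon)}$$
after cancelling a factor of $n$ and clearing the factors of $1/K$; summing the $K$ identical terms then produces the leading factor $2K$, giving exactly the stated bound. I do not expect a serious obstacle beyond correctly isolating the independence structure, namely recognizing that independence holds only along the $i$-index while the sum over categories $k$ is handled purely by the union bound. The threshold split (each category must absorb a $1/K$ fraction of the deviation) and the arithmetic simplification of the exponent are both routine.
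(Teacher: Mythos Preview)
Your proposal is correct and follows exactly the paper's own argument: a union bound over the $K$ categories (justified by the triangle-inequality/pigeonhole step), followed by applying \eqref{main1} with $t=n\epsilon/K$ to each single-category sum. Your additional remark that independence holds along the $i$-index while the $k$-coordinates within a multinoulli are dependent is a useful clarification the paper leaves implicit.
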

\begin{proof}
	The result is obvious by noticing that 
\begin{align*}
	\mathbb{P} \left (   \left |   \sum_{i=1}^n \sum_{k=1}^K   (X_{ik}-p_{ik}) \log p_{ik}   \right| \geq n \epsilon \right ) \leq \sum_{k=1}^K \mathbb{P} \left (   \left |  \sum_{i=1}^n  (X_{ik}-p_{ik}) \log p_{ik}   \right| \geq \frac{n \epsilon}{K} \right ) ,
\end{align*}
and setting $t= n \epsilon /K$ in \eqref{main1}.
\end{proof}

\section{Extension to Grouped Observations}
We now extend our result to a setup where the observations are grouped into different classes. In fact, this is the setup that can be directly applied to the community detection literature, for example, Theorem 2 in \cite{choi2012stochastic} and Theorem 2 in \cite{paul2016consistent}. We will also apply the result in a working paper by the author and collaborators on the theory of hub models, a special latent class model for binary data proposed by \cite{zhao2019network}. 

Let 
$X_{1}^{(1)},X_{2}^{(1)},...,X_{n_1}^{(1)},X_{1}^{(2)},X_{2}^{(2)},...,X_{n_2}^{(2)},...,X_{1}^{(I)},X_{2}^{(I)},...,X_{n_I}^{(I)}$ be independent Bernoulli variables, where $p_{j}^{(i)}$ is the parameter for $X_{j}^{(i)}$. Let $\sum_{i=1}^I n_i=n$. And let $\bar{p}^{(i)}= \frac{1}{n_i} \sum_{j=1}^{n_i}p_{j}^{(i)}$ for $i=1,...,I$, where $\bar{p}^{(i)} \in [0,1]$. 

\begin{theorem}
	For all $t>0$, 
	\begin{align}\label{extension}
		\mathbb{P}\left (  \left | \sum_{i=1}^I  \sum_{j=1}^{n_i}  ( X_{j}^{(i)}-p_j^{(i)} ) \log \bar{p}^{(i)}  \right | \geq t  \right ) \leq 2\exp \left \{-\frac{t^2 }{2(n+t)} \right \}.
	\end{align}	
\end{theorem}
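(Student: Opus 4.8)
The plan is to mirror the proof of Theorem~\ref{main_thm}: bound the moment generating function of each summand, multiply over the independent variables, and then run the same Chernoff argument. Writing $Y_j^{(i)} = (X_j^{(i)}-p_j^{(i)})\log\bar p^{(i)}$, independence gives, for $|\lambda|<1$,
\[
\mathbb{E}\Bigl[e^{\lambda\sum_{i}\sum_{j}Y_j^{(i)}}\Bigr] = \prod_{i=1}^I\prod_{j=1}^{n_i}\mathbb{E}\bigl[e^{\lambda Y_j^{(i)}}\bigr].
\]
The new feature, compared with Theorem~\ref{main_thm}, is that the coefficient $\log\bar p^{(i)}$ is the log of the group average, whereas the mean of $X_j^{(i)}$ is the individual parameter $p_j^{(i)}$. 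Because these no longer coincide, the single-variable bound \eqref{mgf_bound} does not apply term by term, and the main work is to show that this mismatch washes out once the factors within a group are multiplied together.

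First I would compute the individual factor. Writing $L=\log\bar p^{(i)}$ and $q=p_j^{(i)}$, a direct calculation gives $\mathbb{E}[e^{\lambda Y_j^{(i)}}] = e^{-\lambda q L}\bigl[1+q(e^{\lambda L}-1)\bigr]$. Applying the elementary inequality $1+x\le e^x$ to the bracket yields $\mathbb{E}[e^{\lambda Y_j^{(i)}}]\le\exp\{q(e^{\lambda L}-1-\lambda L)\}$. This is the crucial step: the individual parameters now enter only linearly in the exponent, so the product over $j$ within group $i$ collapses, using $\sum_{j}p_j^{(i)}=n_i\bar p^{(i)}$, to
\[
\prod_{j=1}^{n_i}\mathbb{E}\bigl[e^{\lambda Y_j^{(i)}}\bigr] \le \exp\Bigl\{n_i\,\bar p^{(i)}\bigl(e^{\lambda L}-1-\lambda L\bigr)\Bigr\}.
\]

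It then remains to bound $g(\bar p^{(i)}):=\bar p^{(i)}(e^{\lambda L}-1-\lambda L)$ by $\lambda^2/(2(1-|\lambda|))$, uniformly in $\bar p^{(i)}$. Here I would reuse the heart of Theorem~\ref{main_thm}: expanding the exponential gives $g(\bar p^{(i)})=\sum_{m\ge2}\lambda^m\bar p^{(i)}(\log\bar p^{(i)})^m/m!$, and the same Stirling estimate used for the term $A_1$ there shows $|\bar p^{(i)}(\log\bar p^{(i)})^m|\le(m/e)^m\le m!/\sqrt{2\pi m}$. Since $\sqrt{2\pi m}>2$ for $m\ge2$, a term-by-term comparison gives $g(\bar p^{(i)})\le\frac12\sum_{m\ge2}|\lambda|^m=\lambda^2/(2(1-|\lambda|))$; the boundary cases $\bar p^{(i)}\in\{0,1\}$, which force all $p_j^{(i)}$ equal and the corresponding summands to vanish, are handled exactly as in Theorem~\ref{main_thm}. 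Multiplying over the $I$ groups and using $\sum_i n_i=n$ recovers precisely the aggregate bound $\mathbb{E}[e^{\lambda\sum\sum Y}]\le\exp\{n\lambda^2/(2(1-|\lambda|))\}$ from the proof of Theorem~\ref{main_thm}, after which the identical Chernoff argument with $\lambda=\pm t/(t+n)$ delivers \eqref{extension}.

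I would flag the $1+x\le e^x$ collapse as the main idea rather than an obstacle: it is what replaces the exact single-variable MGF computation and lets the individual parameters enter only through their group average $\bar p^{(i)}$. Note that $g(\bar p^{(i)})$ is itself an \emph{upper} bound for $\log G(\bar p^{(i)},\lambda)$, so one cannot simply quote \eqref{mgf_bound}; the series/Stirling estimate must be redone, though in a cleaner form since the analogue of the $A_2$ term is absent. Everything downstream is a faithful repetition of Theorem~\ref{main_thm}.
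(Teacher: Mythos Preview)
Your argument is correct, but the reduction is genuinely different from the paper's. The paper first rewrites $\sum_j (X_j^{(i)}-p_j^{(i)})\log\bar p^{(i)}=\sum_j (X_j^{(i)}-\bar p^{(i)})\log\bar p^{(i)}$ and observes that each factor of the group MGF has the affine form $p_j^{(i)}A+(1-p_j^{(i)})B$; the AM--GM inequality then gives $\prod_j(p_j^{(i)}A+(1-p_j^{(i)})B)\le(\bar p^{(i)}A+(1-\bar p^{(i)})B)^{n_i}=G(\bar p^{(i)},\lambda)^{n_i}$, after which the MGF bound \eqref{mgf_bound} from Theorem~\ref{main_thm} is quoted directly and no moment estimate needs to be redone. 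Your route instead linearizes via $1+x\le e^x$, which makes the exponent additive in $p_j^{(i)}$ and collapses to $n_i\bar p^{(i)}(e^{\lambda L}-1-\lambda L)$; this quantity upper-bounds $\log G(\bar p^{(i)},\lambda)$ rather than equaling it, so, as you correctly flag, \eqref{mgf_bound} cannot be invoked and the series/Stirling estimate must be rerun (in the simpler form with only the $A_1$-type term). The paper's approach is shorter and recycles Theorem~\ref{main_thm} as a black box; yours avoids the AM--GM trick in favor of a more standard Bennett-type manipulation and makes transparent why only the $p(\log p)^m$ contribution survives. Both reach the same aggregate MGF bound, and the Chernoff step is identical.
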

Note that here the model assumption on $X^{(i)}_{j}$ is identical to the setup in Section \ref{sec:main}, where each Bernoulli variable has its own parameter. The function we consider in the inequality is, however, defined differently. Moreover, this theorem reduces to Theorem \ref{main_thm} when $n_i\equiv 1$ for $i=1,...,I$. 
\begin{proof}
	Let $Z^{(i)}=  \sum_{j=1}^{n_i}  ( X_{j}^{(i)}-p_j^{(i)} ) \log \bar{p}^{(i)}= \sum_{j=1}^{n_i}  ( X_{j}^{(i)}-\bar{p}^{(i)} ) \log \bar{p}^{(i)}$. Consider the MGF $\mathbb{E} [e^{\lambda Z^{(i)}}] $ for $\bar{p}^{(i)} \in (0,1)$.
	\begin{align*}
	\mathbb{E} [e^{\lambda Z^{(i)}}]= & \prod_{j=1}^{n_i} \left ( p_{j}^{(i)} e^{\lambda(1-\bar{p}^{(i)})\log \bar{p}^{(i)}}+(1-p_{j}^{(i)}) e^{-\lambda \bar{p}^{(i)} \log \bar{p}^{(i)}} \right ) \\
	\leq & \left ( \bar{p}^{(i)} e^{\lambda(1-\bar{p}^{(i)})\log \bar{p}^{(i)}}+(1-\bar{p}^{(i)}) e^{-\lambda \bar{p}^{(i)} \log \bar{p}^{(i)}} \right )^{n_i}=(G(\bar{p}^{(i)},\lambda))^{n_i},
	\end{align*}
	where the inequality follows from the inequality of arithmetic and geometric means: $ \sqrt[n]{\prod_{i=1}^n a_i} \leq \sum_{i=1}^n a_i/ n$ for non-negative $a_1,...,a_n$. From \eqref{mgf_bound}, $G(\bar{p}^{(i)},\lambda) \leq  \exp \left \{ \frac{\lambda^2}{2(1-|\lambda|)}  \right  \}$ for $|\lambda|<1$. It follows that $	\mathbb{E} [e^{\lambda Z^{(i)}}]\leq \exp \left \{ \frac{n_i \lambda^2}{2(1-|\lambda|)}  \right  \} $ for $|\lambda|<1$. The inequality also holds for $\bar{p}^{(i)}=0$ or 1 as $\mathbb{E} [e^{\lambda Z^{(i)}}] \equiv 1$. The rest of the proof follows from the standard argument using the Chernoff bound as shown in the proof of Theorem \ref{main_thm}.
\end{proof}

We conclude this note with a corollary that is easily proved  by the same argument for Corollary \ref{cl1}. Let 
$\V{X}_{1}^{(1)},\V{X}_{2}^{(1)},...,\V{X}_{n_1}^{(1)},\V{X}_{1}^{(2)},\V{X}_{2}^{(2)},...,\V{X}_{n_2}^{(2)},...,\V{X}_{1}^{(I)},\V{X}_{2}^{(I)},...,\V{X}_{n_I}^{(I)}$ be independent multinoulli variables, where each $\V{X}_{j}^{(i)}=\left (X_{j1}^{(i)},...,X_{jK}^{(i)} \right )$, and $p_{jk}^{(i)}=\mathbb{P}(X_{jk}^{(i)}=1)$ for $k=1,...,K$. As before, let $n=\sum_{i=1}^I n_i$. And let $\bar{p}_k^{(i)}= \frac{1}{n_i} \sum_{j=1}^{n_i}p_{jk}^{(i)}$ for $i=1,...,I$ and $k=1,...K$, where $\bar{p}_k^{(i)} \in [0,1]$. 

\begin{corollary}
For all $\epsilon>0$, 
	\begin{align*}
\mathbb{P}\left (  \left |  \sum_{i=1}^I   \sum_{j=1}^{n_i} \sum_{k=1}^K( X_{jk}^{(i)}   -p_{jk}^{(i)} )\log \bar{p}^{(i)}_k  \right | \geq n \epsilon \right  ) \leq 2K \exp \left \{ -\frac{n \epsilon^2}{2K(K+\epsilon)} \right \}.
	\end{align*}
\end{corollary}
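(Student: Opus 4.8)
The plan is to mimic the proof of Corollary~\ref{cl1}: decouple the outer sum over categories by a union bound, and then apply the grouped-observation inequality \eqref{extension} separately within each category. First I would note that if the full triple sum has absolute value at least $n\epsilon$, then by the triangle inequality at least one of the $K$ inner double sums $\sum_{i=1}^I \sum_{j=1}^{n_i} (X_{jk}^{(i)} - p_{jk}^{(i)}) \log \bar{p}_k^{(i)}$ must have absolute value at least $n\epsilon/K$. A union bound over $k=1,\dots,K$ then reduces the problem to bounding a single fixed category.

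The key point to verify is that, for each fixed $k$, the quantity $\sum_{i=1}^I \sum_{j=1}^{n_i} (X_{jk}^{(i)} - p_{jk}^{(i)}) \log \bar{p}_k^{(i)}$ falls exactly under the hypotheses of \eqref{extension}. The marginals $X_{jk}^{(i)}$ are $\textnormal{Ber}(p_{jk}^{(i)})$; they are grouped by $i$ into $I$ blocks of sizes $n_i$ summing to $n$; and the coefficient $\log \bar{p}_k^{(i)}$ uses precisely the within-block average $\bar{p}_k^{(i)} = \frac{1}{n_i}\sum_{j=1}^{n_i} p_{jk}^{(i)}$. The one subtlety --- and the only place where care is genuinely needed --- is independence: the components of a single multinoulli $\V{X}_j^{(i)}$ are in general dependent across categories, but for a \emph{fixed} $k$ the variables $X_{jk}^{(i)}$ coming from distinct independent multinoullis are independent across the index pair $(i,j)$, which is all that \eqref{extension} requires. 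Since the union bound never uses independence across $k$, the within-multinoulli cross-category dependence is harmless.

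With these two ingredients in place, the conclusion is a short calculation identical to the one in Corollary~\ref{cl1}. Applying \eqref{extension} with $t = n\epsilon/K$ to each category yields the per-category bound $2\exp\{ -(n\epsilon/K)^2 / (2(n + n\epsilon/K)) \}$, and simplifying the exponent gives $2\exp\{ -n\epsilon^2 / (2K(K+\epsilon)) \}$. Summing these $K$ identical terms produces the overall factor $2K$ and completes the proof. I do not anticipate any real obstacle: the entire content is the independence observation of the previous paragraph, after which the argument is a verbatim repetition of the proof of Corollary~\ref{cl1} with \eqref{extension} in place of \eqref{main1}.
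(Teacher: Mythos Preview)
Your proposal is correct and follows exactly the route the paper indicates: a union bound over the $K$ categories together with \eqref{extension} applied at level $t=n\epsilon/K$, i.e., the argument of Corollary~\ref{cl1} with \eqref{extension} replacing \eqref{main1}. Your remark that only independence across $(i,j)$ for fixed $k$ is needed (not across $k$) is the one point worth making explicit, and it is handled correctly.
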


\end{document}